\theoremstyle{plain}
\newtheorem{thm}{Theorem}[section]
\newtheorem{lem}[thm]{Lemma}
\newtheorem{prop}[thm]{Proposition}
\newtheorem{cor}[thm]{Corollary}
\theoremstyle{definition}
\newtheorem{defi}[thm]{Definition}
\theoremstyle{remark}
\newtheorem{rmk}[thm]{Remark}
\newtheorem*{ack}{Acknowledgments}
\numberwithin{equation}{section}
\def\N{{\mathbb N}}
\def\Z{{\mathbb Z}}
\def\Q{{\mathbb Q}}
\def\R{{\mathbb R}}
\def\C{{\mathbb C}}
\def\A{{\mathbb A}}
\def\P{{\mathbb P}}
\def\cH{\mathcal{H}}
\def\cV{\mathcal{V}}
\def\I{\mathcal{I}}
\def\J{\mathcal{J}}
\def\O{\mathcal{O}}
\def\fm{\mathfrak{m}}
\def\fq{\mathfrak{q}}
\def\a{\alpha}
\def\d{\delta}
\def\f{\phi}
\def\l{\lambda}
\def\n{\nu}
\def\m{\mu}
\def\om{\omega}
\def\p{\pi}
\def\s{\sigma}
\def\t{\tau}
\def\x{\xi}
\def\S{\Sigma}
\def\Om{\Omega}
\def\.{\cdot}
\def\^{\widehat}
\def\~{\widetilde}
\def\ov{\overline}
\def\rat{\dashrightarrow}
\def\surj{\twoheadrightarrow}
\def\inj{\hookrightarrow}
\def\de{\partial}
\def\lru{\lceil}
\def\rru{\rceil}
\def\({\left(}
\def\){\right)}
\newcommand{\ru}[1]{\lru{#1}\rru}
\renewcommand{\and}{ \ \ \text{ and } \ \ }
\renewcommand{\for}{ \ \ \text{ for } \ \ }
\def\Jac{\mathrm{Jac}}
\DeclareMathOperator{\rk} {rk}
\DeclareMathOperator{\Proj} {Proj}
\DeclareMathOperator{\Pic} {Pic}
\DeclareMathOperator{\Aut} {Aut}
\DeclareMathOperator{\Bir} {Bir}
\DeclareMathOperator{\Sing} {Sing}
\DeclareMathOperator{\Bl} {Bl}
\DeclareMathOperator{\val} {val}
\DeclareMathOperator{\Ex} {Ex}
\DeclareMathOperator{\mult} {mult}
\DeclareMathOperator{\ord} {ord}
\DeclareMathOperator{\Cl} {Cl}
\DeclareMathOperator{\lct} {lct}
\DeclareMathOperator{\Fitt} {Fitt}
\DeclareMathOperator{\mld} {mld}
\DeclareMathOperator{\can} {ct}
\begin{document}

\title{Birational rigidity of singular Fano hypersurfaces}

\author{Tommaso de Fernex}

\address{Department of Mathematics, University of Utah, 155 South 1400
East, Salt Lake City, UT 48112-0090, USA}
\email{{\tt defernex@math.utah.edu}}

\thanks{2010 {\it  Mathematics Subject Classification.}
Primary: 14J45; Secondary: 14B05, 14E05, 14E08, 14E18.}
\thanks{{\it Key words and phrases.}
Fano hypersurface, birational rigidity, inversion of adjunction, Mather discrepancy.}

\thanks{The research was partially supported by 
NSF grant DMS-1402907 and NSF FRG grant DMS-1265285.}

\thanks{Compiled on \today. Filename {\tt \jobname}}

\begin{abstract}
We establish birational superrigidity for a large class of singular projective Fano hypersurfaces of index one. In the special case of isolated singularities, our result applies for instance to: (1) hypersurfaces with semi-homogeneous singularities of multiplicity asymptotically bounded by twice the square root of the dimension of the hypersurface, (2) hypersurfaces with isolated singularities whose Tyurina numbers satisfy a similar bound, and (3) hypersurfaces with isolated singularities whose dual variety is a hypersurface of degree sufficiently close to the expected degree. 
\end{abstract}

\maketitle

\section{Introduction}

The interest in birationally rigidity originates from the
realization that, differently from the surface case, 
higher dimensional Fano varieties and Mori fiber spaces
present a wide spectrum of possible birational characteristics, 
with rational varieties at one end of the spectrum and
birationally superrigid varieties at the other end.
The problem of determining birational links between different Mori fiber spaces
finds its motivation in the minimal model program, and can be viewed
as the counterpart of the question asking about the existence of flops
between minimal models. 

Birational rigidity has been extensively studied in dimension three, 
and several examples of birationally rigid Fano manifolds
are also known in higher dimensions. 
Starting with Iskovskikh and Manin's theorem on smooth quartic threefolds, 
the case of smooth hypersurfaces of projective spaces has been 
studied and progressively understood, over the arc of forty years, 
in the papers \cite{IM71,Puk87,Puk98,Che00,Puk02a,dFEM03,dF}, 
culminating with the following theorem. 

\begin{thm}[\protect{\cite[Theorem~A]{dF}}]
\label{t:dF}
For $N \ge 4$, every smooth hypersurface $V$ of degree $N$ in $\P^N$
is birationally superrigid.
\end{thm}

This means that there are no birational 
modifications of $V$ into Mori fiber spaces other than isomorphisms, and it implies that $V$ is not rational.
Since no other smooth Fano hypersurface is birationally superrigid, 
one obtains from this fact the complete list of smooth birationally superrigid Fano hypersurfaces.
Actually, the proof in \cite{dF} has a gap, 
and the main result of the present paper (see Theorem~\ref{t:main} below) 
provides a new proof which works, in the smooth case, 
for all $N \ge 7$, the lower dimensional cases already being 
established in the earlier papers on the subject cited above.\footnote{An {\it erratum} with an amended 
proof in the smooth case has been written to accompany \cite{dF}.}

The main purpose of this paper is to extend this study to singular hypersurfaces, 
a setting that is still far from being understood. 

The property of birational rigidity is quite sensitive to the singularities. 
For example, smooth quartic threefolds are birationally
superrigid, but those with a double point are only birationally rigid
since the projection from the point induces a birational automorphism.
Furthermore, quartic threefolds that are singular (with multiplicity two) along a line
can be birationally modified into conic bundles. 

In low dimensions, there are sporadic results on
the birational rigidity of quartic threefolds and sextic 
fivefolds with mild singularities (mostly ordinary double points)
obtained in \cite{Puk88,CM04,Mel04,Che07}.
A contribution in higher dimensions was given by 
Pukhlikov in \cite{Puk02b,Puk03}, where hypersurfaces with 
semi-homogeneous singularities
are studied under a certain ``regularity'' condition requiring that, 
at each point of the variety, the intermediate homogeneous terms
of the local equation of the hypersurface form a regular sequence.
We recall that semi-homogeneous singularities (also known as 
\emph{ordinary multiple points}) are isolated hypersurface 
singularities whose tangent cone is smooth away from the vertex.

Singular Fano hypersurfaces provide a rich setting to explore. The works on quartic threefolds
show that, in low dimensions, the problem becomes rather delicate already when
dealing with very mild singularities. 
The main result of this paper should be viewed as complementing those studies,
by showing that the situation stabilizes in the strongest possible terms when the 
dimension is let grow and the ``depth'' of the singularities is
maintained, in some suitable sense, asymptotically bounded
in terms of the dimension.

We allow positive dimensional singularities, 
and avoid to impose any ``regularity'' conditions on the local equations of 
the hypersurface. 
The following defines the type of condition on singularities we consider. 

\begin{defi}
\label{def}
Let $V \subset \P^N$ be a hypersurface, and let $P \in V$ be a closed point. 
For any pair of integers $(\d,\n)$ with $\d \ge -1$ and $\n \ge 1$, 
we say that $P$ is a \emph{singularity of type $(\d,\n)$} if 
the singular locus of $V$ has dimension at most $\d$ near $P$ and
given a general complete intersection $X \subset V$ 
of codimension $c = \min\{\d+2,\dim V\}$ through $P$, 
the $(\n-1)$-th power of the maximal ideal $\fm_{X,P} \subset \O_X$ is contained
in the integral closure of the Jacobian ideal $\Jac_X$ of $X$.
\end{defi}

For instance, regular points are singularities of type $(-1,1)$, and 
semi-homogeneous hypersurface singularities 
of multiplicity $\n$ are singularities of type $(0,\n)$.
More generally, every isolated hypersurface singularity of multiplicity $\n$ whose 
tangent cone is smooth away from a set of dimension two
is a singularity of type $(0,\n)$.
In general, singularities of type $(\d,\n)$ are also of type $(\d',\n')$ for every
$\d' \ge \d$ and $\n' \ge \n$. 

We can now state our main result. 

\begin{thm}
\label{t:main}
Let $N$, $\d$ and $\n$ be fixed integers with $\d \ge -1$, $\n \ge 1$, and
\[
2\d+\n + 7 \le \frac{2(N+1)}{\sqrt N}.
\]
Then every hypersurface $V \subset \P^N$ of degree $N$
with only singularities of type $(\d,\n)$ is a 
Fano variety with Picard number 1 and factorial terminal singularities, 
and is birationally superrigid.  
In particular, $V$ is not rational and $\Bir(V) = \Aut(V)$.
\end{thm}

The proof of this theorem
combines the method of maximal singularities
with inversion of adjunction, Nadel's vanishing theorem, 
and properties of Mather log discrepancies. 
Even assuming that there are not singularities, 
the core of the proof is quite different from the original proof given in the smooth case
in \cite{dF}. 

To illustrate Theorem~\ref{t:main} when $V$ is singular, we present three special cases
in which the singularities are isolated. 
In order to keep the formulas in the statements as simple as possible, we 
apply the theorem under the stronger assumption that 
\[
2\d+\n + 7 \le 2\sqrt N.
\]
We start with the case of semi-homogeneous singularities.

\begin{cor}
\label{c:ordinary}
Every hypersurface $V \subset \P^N$ of degree $N$ with semi-homogeneous singularities
of multiplicity at most $2\sqrt N - 7$ is birationally superrigid.
\end{cor}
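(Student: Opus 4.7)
The plan is to deduce this as a direct application of Theorem~\ref{t:main}. Let $V \subset \P^N$ be a hypersurface of degree $N$ whose singularities are all semi-homogeneous of multiplicity at most $\nu \le \frac{1}{2}N - 3$. Since semi-homogeneous singularities are, by definition, isolated, the singular locus of $V$ has dimension zero, so one may take $\delta = 0$ in Definition~\ref{def}.

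The next step is to verify that each such point is a singularity of type $(0,\nu)$ in the sense of Definition~\ref{def}. This is precisely the assertion recorded in the paragraph following the definition, so in a formal write-up one could simply cite it; to give an idea of why it holds, I would argue as follows. Let $P \in V$ be a semi-homogeneous singularity of multiplicity $\nu$ and let $X \subset V$ be a general complete intersection of codimension $2$ through $P$. Cutting by two general hyperplanes through $P$ produces a germ $(X,P)$ whose tangent cone at $P$ is a general codimension-$2$ linear section of the tangent cone of $V$ at $P$; since the latter is smooth away from the vertex, the former remains smooth away from the vertex by a Bertini-type argument. A standard local computation for a hypersurface singularity with a smooth projective vanishing locus of the leading form then shows that the partial derivatives of the defining equation generate an ideal whose integral closure contains $\fm_{X,P}^{\nu-1}$, as desired.

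Once this identification is in hand, every singular point of $V$ is of type $(0,\nu)$ with $\delta + \nu = \nu \le \frac{1}{2}N - 3$, so the hypotheses of Theorem~\ref{t:main} are satisfied and birational superrigidity follows at once. I do not anticipate a serious obstacle: all the substantive content lives inside Theorem~\ref{t:main}, and the only step that is not purely formal is the Jacobian-ideal computation relating semi-homogeneous singularities of multiplicity $\nu$ to the type $(0,\nu)$ condition, which is the routine local verification sketched above.
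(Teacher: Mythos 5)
Your proposal is correct and takes essentially the same route as the paper: the paper's Proposition~\ref{p:ordinary} establishes, via the blow-up at $P$ and the smoothness of the projectivized tangent cone, exactly the identification $\n_P(X) = e_P(X)$ that your Bertini-plus-local-Jacobian sketch supplies for the general codimension-two section $X$, and the corollary then follows at once from Theorem~\ref{t:main} with $\d = 0$, using $e_P(X) = e_P(V)$.
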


Comparing this with the results of Pukhlikov, one sees that
while the bounds on multiplicity in the corollary are more restrictive 
than those in his papers, no ``regularity'' 
assumption is required in our result. Furthermore, the hypothesis on the singularities
being semi-homogeneous can be relaxed by allowing, for instance, 
the tangent cones to have singularities in dimension 1 or 2.

Another special case of the theorem can be formulated in terms of 
the Tyurina numbers of the singularities.
Let $\t_P(V)$, $\t'_P(V)$ and $\t''_P(V)$ be, respectively, the Tyurina numbers (at $P$) of
$V$, of a general hypersurface in $V$ passing through $P$, 
and of a general complete intersection of codimension 2 through $P$. 

\begin{cor}
\label{c:Tyurina}
Let $V \subset \P^N$ be a hypersurface of degree $N$ with isolated 
singularities, and assume that for every $P \in V$
\[
\min\{\,\t_P(V),\, \t'_P(V),\, \t''_P(V)\,\}  \le 2\sqrt N - 8.
\]
Then $V$ is birationally superrigid.
\end{cor}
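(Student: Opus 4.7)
I would apply Theorem \ref{t:main} with $\delta=0$. Since $V$ has isolated singularities, the condition $\dim\Sing(V)\le 0$ is automatic at every point, so it suffices to exhibit, for each singular $P\in V$, an integer $\nu\le \frac12 N-3$ with $\fm_{X,P}^{\nu-1}\subset\overline{\Jac_X}$, where $X\subset V$ is a general codimension-two complete intersection through $P$.

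The core of the argument is a Loewy-length estimate applied to $X$. Locally near $P$, the variety $X$ is a hypersurface singularity inside the smooth $(N-2)$-fold $H_1\cap H_2$, cut out by the restriction of the defining equation of $V$; its Tyurina number at $P$ coincides with $\t''_P(V)$ by definition. Consequently, the Artinian local ring $\O_{X,P}/\Jac_X$ has length $\t''_P(V)$, and its Loewy length is bounded by this length, giving
\[
\fm_{X,P}^{\t''_P(V)}\subset \Jac_X\subset \overline{\Jac_X}.
\]
Hence $P$ is of type $(0,\t''_P(V)+1)$, and the case $\t''_P(V)\le \frac12 N-4$ follows directly from Theorem \ref{t:main}.

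To deduce the same conclusion from the other two terms in the minimum, I would establish the monotonicity
\[
\t''_P(V)\le \t'_P(V)\le \t_P(V),
\]
i.e.~that the Tyurina number of an isolated hypersurface singularity does not increase under a generic hyperplane section. After a local change of coordinates placing the cutting hyperplane as $\{x_m=0\}$, the Jacobian generators of the section are obtained from those of the ambient by restriction and by dropping one partial; a colength comparison, using that for generic cuts the dropped partial derivative falls into the ideal generated by the remaining ones together with $f|_H$, then yields the desired estimate.

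The Loewy-length step is a standard application of Artinian length bounds once one identifies $\t''_P(V)$ as the Tyurina number of $X$, and the passage to Theorem \ref{t:main} is then mechanical. The main technical ingredient, and the place where genericity is essential, is the monotonicity comparison among the three Tyurina numbers; with it, the minimum coincides with $\t''_P(V)$, and the hypothesis $\min\le \frac12 N-4$ provides the required $\nu\le\frac12 N-3$.
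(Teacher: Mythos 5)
Your first step is sound: for the general codimension-two section $X \subset V$ through $P$, the Artinian ring $\O_{X,P}/\Jac_X$ has length $\t''_P(V)$, and the Loewy-length bound gives the honest containment $(\fm_{X,P})^{\t''_P(V)} \subset \Jac_X \subset \ov{\Jac_X}$, hence $\n_P(X) \le \t''_P(V) + 1$. This is exactly the content of Proposition~\ref{p:Tyurina} (which the paper proves instead by a valuative chain argument, and only up to integral closure), so the case where the minimum is realized by $\t''_P(V)$ is complete. The genuine gap is in your reduction of the other two cases to this one via the claimed monotonicity $\t''_P(V) \le \t'_P(V) \le \t_P(V)$. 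Your justification --- that for a generic hyperplane section the dropped partial derivative lies in the ideal generated by $f|_H$ and the remaining restricted partials --- is precisely the statement that is \emph{not} available. What is known in this direction is Teissier's Idealistic Bertini Theorem \cite[2.15.\ Corollary~3]{Tei77}, which gives only membership in the \emph{integral closure}:
\[
\ov{\Jac_X \.\,\O_H} = \ov{\Jac_H},
\]
not membership in $\Jac_H$ itself. Integral-closure membership is useless for comparing Tyurina numbers: from $\de f/\de x_n|_H \in \ov{\Jac_H}$ one only controls the colength of $\ov{\Jac_H}$, whereas $\t_P(H)$ is the colength of $\Jac_H$, which can be strictly larger; the inequality runs the wrong way. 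To my knowledge, non-increase of the Tyurina number under generic hyperplane sections is not a known theorem (contrast with Teissier's monotonicity of the $\m^*$-sequence of Milnor numbers), and the very formulation of the corollary --- a minimum over $\t_P(V)$, $\t'_P(V)$, $\t''_P(V)$ rather than just $\t''_P(V)$ --- signals that the author could not assume it: if your monotonicity held, the hypothesis would reduce to a condition on $\t''_P(V)$ alone.

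The paper's proof is structured exactly to sidestep this point: the quantity that \emph{is} monotone under generic sections is $\n_P$, not $\t_P$, and it is monotone precisely because it is defined through the integral closure $\ov{\Jac}$, so Idealistic Bertini applies directly (Proposition~\ref{p:restr}: $\n_P(H) \le \n_P(X)$ for a general hyperplane section $H$ through an isolated singularity). Given $i \in \{0,1,2\}$ with $\t^{(i)}_P(V) \le \frac 12 N - 4$, the paper first converts the Tyurina bound into $\n_P(V^{(i)}) \le \frac 12 N - 3$ at level $i$ via Proposition~\ref{p:Tyurina}, and only then cuts down from codimension $i$ to codimension two using Proposition~\ref{p:restr}, after which Theorem~\ref{t:main} applies with $(\d,\n) = (0, \frac 12 N - 3)$. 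Your argument is repaired the same way: run your Loewy-length estimate at level $i$ to get $(\fm_{V^{(i)},P})^{\t^{(i)}_P(V)} \subset \Jac_{V^{(i)}}$, and then transport this containment through the remaining $2-i$ general hyperplane cuts using $\ov{\Jac_{V^{(i)}}}\.\,\O_X \subset \ov{\Jac_X}$; this replaces the unproven Tyurina monotonicity by the integral-closure statement that is actually true, at the harmless cost of landing in $\ov{\Jac_X}$ rather than $\Jac_X$ --- which is all that the definition of a singularity of type $(\d,\n)$ requires.
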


Since the Tyurina number is bounded above by the Milnor number, 
a similar corollary can be formulated in terms of the Milnor numbers
of general restrictions of $V$, 
which are known as the \emph{Teissier--Milnor numbers} of $V$ \cite{Tei73}. 
Using then a result of Teissier \cite{Tei77}, we obtain the following result, 
which comes unexpected to us.

\begin{cor}
\label{c:dual}
Let $V \subset \P^N$ be a hypersurface of degree $N$ with isolated 
singularities, and assume that the dual variety $\check{V} \subset \check\P^N$
is a hypersurface of degree 
\[
\deg\check V \ge N(N-1)^{N-1} - (4\sqrt N + 2s - 18), 
\]
where $s$ is the number of singular points of $V$. 
Then $V$ is birationally superrigid.
\end{cor}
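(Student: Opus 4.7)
The plan is to reduce Corollary~F to Corollary~E by converting the lower bound on $\deg\check V$ into a uniform upper bound on the Teissier--Milnor numbers of $V$ at each singular point, and then transferring this bound to the Tyurina numbers via the universal inequality $\t\le\mu$ for isolated hypersurface singularities. The bridge between the two corollaries is Teissier's formula for the class (degree of the dual variety) of a projective hypersurface with only isolated singularities.

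First, I would invoke that formula in the form
\[
\deg\check V = N(N-1)^{N-1} - \sum_{P \in \Sing V}\bigl(\mu_P(V) + \mu_P'(V)\bigr),
\]
where $\mu_P(V)$ is the Milnor number of $V$ at $P$ and $\mu_P'(V)$ is the Milnor number at $P$ of a general hyperplane section of $V$ through $P$; these are the top two Teissier--Milnor numbers alluded to just before the statement of the corollary. The assumption $\deg\check V \ge N(N-1)^{N-1} - (N+2s-10)$ then translates immediately to the global bound
\[
\sum_{P \in \Sing V}\bigl(\mu_P(V) + \mu_P'(V)\bigr) \le N + 2s - 10.
\]

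Next, I would localize this bound at each singular point. Because the singularities are isolated and $V$ has positive-dimensional tangent cone at each $P \in \Sing V$, both $\mu_P(V)$ and $\mu_P'(V)$ are positive integers, so each of the $s$ summands above is at least $2$. Fixing a singular point $P$ and discarding the contributions of the remaining $s-1$ points yields
\[
\mu_P(V) + \mu_P'(V) \le N + 2s - 10 - 2(s-1) = N - 8,
\]
so $\min\{\mu_P(V),\mu_P'(V)\} \le \frac{1}{2}N - 4$. The inequality $\t \le \mu$ for isolated hypersurface singularities then gives $\min\{\t_P(V),\t_P'(V),\t_P''(V)\} \le \frac{1}{2}N - 4$, which is exactly the hypothesis of Corollary~E. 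Invoking that corollary completes the proof.

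The main obstacle is not a substantive mathematical difficulty but rather the bookkeeping of invariants: one needs to cite Teissier's class formula in precisely the form above (only the top two Teissier--Milnor numbers appear on the right), and one needs to confirm that the ``general hyperplane section through $P$'' featuring in that formula agrees with the ``general complete intersection of codimension $1$ through $P$'' used to define $\t_P'$ in Corollary~E. For hypersurfaces in $\P^N$ these coincide, so once the formula is on the table the remainder of the argument is a short arithmetic manipulation, and all the heavy lifting has already been done by the previously established Corollary~E.
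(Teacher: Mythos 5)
Your proposal is correct and follows essentially the same route as the paper's own proof: both invoke Teissier's class formula $\deg\check V = N(N-1)^{N-1} - \sum_{j}\big(\mu_{P_j}(V) + \mu'_{P_j}(V)\big)$ from \cite{Tei80}, isolate a single singular point by using that each of the other $s-1$ summands is a positive integer contributing at least $2$, arriving at $\mu_P(V)+\mu'_P(V)\le N-8$ and hence $\min\{\mu_P(V),\mu'_P(V)\}\le \tfrac12 N-4$, and conclude via the inequality $\tau\le\mu$ together with Corollary~\ref{c:Tyurina}. Your added remark that the general hyperplane section in Teissier's formula agrees with the codimension-one general complete intersection defining $\tau'_P$ is a sound (if implicit in the paper) consistency check, but it does not change the argument.
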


Properties of singularities of type $(\d,\n)$ are discussed in Section~\ref{s:sing},
and the three corollaries above are proven in Section~\ref{s:cor}.
The subsequent section gathers several definitions and properties of singularities
and multiplicites; in order to deal with the singularities of the hypersurface, we 
work with Mather log discrepancies, which are recalled there. 
Finally, the last section is devoted to the proof of Theorem~\ref{t:main}.
All varieties are assumed to be defined over the field of complex numbers $\C$.

\begin{ack}
We thank J\'anos Koll\'ar for several useful 
comments and for pointing out an error in a lemma of \cite{dF} 
which was used in a previous version of this paper. 
We also thank Roi Docampo, Lawrence Ein, Mircea 
Musta\c t\u a, and Fumiaki Suzuki for useful comments and suggestions.
\end{ack}

\section{Singularities of type $(\d,\n)$.}
\label{s:sing}

In this section we discuss some
properties of singularities of type $(\d,\n)$ introduced in Definition~\ref{def}.
For ease of notation, it is convenient to focus on affine hypersurfaces. 
Throughout this section, fix $n \ge 1$, and let $X \subset \A^n$ be a hypersurface. 
Recall that if $h(x_1,\dots,n_n) = 0$ is an equation for $X$, 
then the Jacobian ideal $\Jac_X \subset \O_X$ is cut out, on $X$, 
by the partial derivatives of $h$:
\[
\Jac_X = \Big(\frac{\de h}{\de x_1},\dots,\frac{\de h}{\de x_n}\Big)\.\O_X.
\]

We say that a closed point $P \in X$ is an isolated singularity
if $X$ is smooth in a punctured neighborhood of $P$. Note that this includes
the possibility that $X$ is smooth at $P$. 
For an isolated singularity $P \in X$, we define
\[
\n_P(X) := \min\big\{\, \n \in \Z_{>0}
\mid (\fm_{X,P})^{\n - 1} \subset \ov{\Jac_X} \,\big\},
\]
where the bar in the right-hand side denotes integral closure. 

\begin{rmk}
A closed point $P$ on a normal hypersurface $V \subset \P^N$
is a singularity of type $(\d,\n)$ if and only if the singular locus has dimension at most  $\d$
and $\n_P(X) \le \n$ for a general complete intersection $X \subset V$
of codimension $c = \min\{\d+2,\dim V\}$ through $P$.
\end{rmk}

\begin{prop}
\label{p:restr}
Assume that $n \ge 2$, and let $P \in X$ be an isolated singularity. 
Then for every general hyperplane section $H \subset X$ through $P$
we have 
\[
\n_P(H) \le \n_P(X).
\]
\end{prop}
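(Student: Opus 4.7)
My plan is to prove the proposition in two steps: a reduction to a key ideal inclusion, followed by the proof of that inclusion via a Bertini-type analysis of Jacobian minors.

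First, the reduction. Set $\n := \n_P(X)$ and write $H = X \cap V(h)$ for a general $h \in \fm_{X,P}$. Since $\cO_H = \cO_X/(h)$, one has $\fm_{H,P} = \fm_{X,P}/(h)$ and hence $\fm_{H,P}^{\n-1} = \fm_{X,P}^{\n-1} \cdot \cO_H$. Any $g \in \fm_{H,P}^{\n-1}$ therefore lifts to some $\tilde g \in \fm_{X,P}^{\n-1}$, which by hypothesis lies in $\overline{\Jac_X}$. Reducing the integral equation satisfied by $\tilde g$ over $\Jac_X$ modulo $(h)$ shows that integral closure descends to the quotient, i.e.\ $\overline{I} \cdot \cO_H \subseteq \overline{I \cdot \cO_H}$ for any ideal $I$. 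Applied to our situation this gives
\[
g = \tilde g\vert_H \in \overline{\Jac_X} \cdot \cO_H \subseteq \overline{\Jac_X \cdot \cO_H}.
\]
It thus suffices to prove, for a general hyperplane section $H$, the inclusion
$\Jac_X \cdot \cO_H \subseteq \overline{\Jac_H}$.

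Next, the key inclusion. Locally at $P$, embed $X \subset Y$ in a smooth ambient variety of dimension $n$, with $X$ cut out by $f_1, \ldots, f_c$, and let $\ell$ be a general linear form on $Y$ vanishing at $P$. Then $\Jac_X$ is generated by the $c \times c$ minors $M_I$ of the Jacobian matrix $J = (\partial f_i/\partial x_j)$, while $\Jac_H$ is generated by the $(c+1) \times (c+1)$ minors of the augmented matrix
\[
J' = \binom{J}{\nabla \ell}.
\]
Expanding along the last row of $J'$, each generator of $\Jac_H$ takes the form $\sum_{k} \pm \ell_{x_{i_k}} M_{I \setminus \{i_k\}}$ for a $(c+1)$-subset $I$: a scalar linear combination of the $M_I$ whose coefficients are drawn from the gradient of $\ell$.

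To complete the argument I would apply the valuative criterion for integral closure: one needs $v(\Jac_H) \le v(M_I)$ for every $M_I$ and every divisorial valuation $v$ on $H$ centered at $P$. Since only the finitely many Rees valuations of $\Jac_H$ are relevant, the explicit identities above together with the scalar freedom in $\nabla \ell$ should ensure that a generic $\ell$ introduces no unexpected cancellation, yielding $v(\Jac_H) = v(\Jac_X \cdot \cO_H)$ for all such $v$. The hard part is precisely this Bertini-type genericity statement: guaranteeing that a single general $\ell$ simultaneously avoids all exceptional cancellations across the finitely many Rees valuations of $\Jac_H$. Equivalently, one needs that on the normalized blow-up of $\Jac_X$, the strict transform of a general $H$ is already a proper birational model on which $\Jac_H$ pulls back with the expected order of vanishing along the relevant exceptional divisors. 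I expect most of the technical work to concentrate here.
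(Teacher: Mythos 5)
Your reduction step is exactly the one the paper uses: from $\fm_{H,P} = \fm_{X,P}\cdot\O_H$ and the persistence of integral closure under the quotient map you get $\fm_{H,P}^{\n-1} \subset \ov{\Jac_X}\cdot\O_H \subset \ov{\Jac_X\cdot\O_H}$, so everything hinges on the containment $\Jac_X\cdot\O_H \subset \ov{\Jac_H}$ for a general hyperplane section $H$. At this point the paper simply cites Teissier's Idealistic Bertini Theorem \cite[2.15.\ Corollary~3]{Tei77}, which asserts precisely $\ov{\Jac_X\cdot\O_H} = \ov{\Jac_H}$, whereas you set out to prove this inclusion from scratch and, as you yourself acknowledge, do not complete the argument: the write-up ends by announcing that the ``hard part'' is a Bertini-type genericity statement. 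That statement is not a routine verification --- it is the entire content of the proposition beyond formal manipulations, so this is a genuine gap.

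Two concrete problems with the sketch. First, the expansion of the $(c+1)\times(c+1)$ minors along the row $\nabla\ell$ only yields the \emph{trivial} inclusion $\Jac_H \subset \Jac_X\cdot\O_H$ (each generator of $\Jac_H$ is a constant-coefficient combination of $c\times c$ minors of $J$), which points in the wrong direction; nothing in that computation bounds $v(\Jac_H)$ from above by $v(M_I)$, which is what you need. Second, the proposed genericity argument is circular as stated: the finitely many Rees valuations of $\Jac_H$ live on $H$ and therefore vary with $\ell$, so one cannot fix them first and then choose $\ell$ general with respect to them; likewise, requiring that the strict transform of a general $H$ on the normalized blow-up of $\Jac_X$ compute $\ov{\Jac_H}$ with the expected orders of vanishing is essentially a restatement of the theorem to be proved, not a step toward it. Teissier's proof of idealistic Bertini runs through genuinely different machinery (the principle of specialization of integral dependence, applied to the family of all hyperplane sections through $P$), and either that citation or an argument of comparable substance is what is missing from your proof.
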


\begin{proof}
Teissier's Idealistic Bertini Theorem \cite[2.15 Corollary~3]{Tei77} implies that 
$\ov{\Jac_X|_H} = \ov{\Jac_H}$.
By the definition of integral closure, 
there is an inclusion $\ov{\Jac_X}|_H \subset \ov{\Jac_X|_H}$.
Since $\fm_{X,P}|_H = \fm_{H,P}$, the proposition follows. 
\end{proof}

\begin{rmk}
It follows by Proposition~\ref{p:restr}
that a singularity of type $(\d,\n)$ of a hypersurface $V \subset \P^N$ 
is also of type $(\d',\n')$ for every $\d' \ge \d$ and $\n' \ge \n$. 
\end{rmk}

A special case where $\n_P(X)$ is easy to compute is when
$P \in X$ is a semi-homogeneous hypersurface singularity.
We denote by $e_P(X)$ the \emph{multiplicity} of $X$ at $P$, given by 
the degree of the tangent cone $C_PX$. 

\begin{prop}
\label{p:ordinary}
If $P \in X$ is a semi-homogeneous hypersurface singularity, then
\[
\n_P(X) = e_P(X). 
\]
\end{prop}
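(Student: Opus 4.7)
The plan is to use the blow-up of $X$ at $P$ to compare $\Jac_X$ and $\fm_{X,P}^{e-1}$ after pullback, then to conclude via the standard characterization of integral closure through proper birational morphisms to a normal scheme. The semi-homogeneous hypothesis will enter at exactly one geometric point: the smoothness of the projectivized tangent cone forces the partial derivatives of the leading form to have no common zero on the exceptional fiber.

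First, I would fix local coordinates $(x_1,\dots,x_n)$ centered at $P$ and write $h = h_e + h_{e+1}+\cdots$ with $h_k$ homogeneous of degree $k$, $e = e_P(X)$, and $\{h_e = 0\}\subset \P^{n-1}$ smooth. Let $\Bl_P \A^n \to \A^n$ be the blow-up, with exceptional divisor $E\cong \P^{n-1}$, denote by $\~X$ the strict transform of $X$, and set $F := \~X\cap E$. Since $F = \{h_e=0\}\subset E$ is smooth and $X$ has an isolated singularity at $P$, the variety $\~X$ is smooth. In the standard chart with coordinates $(s,y_2,\dots,y_n)$, where $x_1=s$ and $x_i=sy_i$, the pullback of $h$ factors as $s^e \~h$, and the pullback of $\partial h/\partial x_i$ factors as $s^{e-1}\tilde g_i$ with $\tilde g_i|_E = (\partial h_e/\partial x_i)(1,y_2,\dots,y_n)$. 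The smoothness of the tangent cone away from the origin ensures that the $\tilde g_i$ have no common zero on $F$, so they generate the unit ideal on $\~X$ in a neighborhood of $F$. Combined with $\fm_{X,P}\cdot \O_{\~X} = \O_{\~X}(-F)$, this gives the key equality
\[
\Jac_X\cdot\O_{\~X} \;=\; \O_{\~X}\bigl(-(e-1)F\bigr) \;=\; \fm_{X,P}^{e-1}\cdot\O_{\~X}.
\]

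Since $\~X$ is smooth (hence normal), the Rees--Teissier description of integral closure via proper birational maps then yields $\ov{\Jac_X} = \ov{\fm_{X,P}^{e-1}}$; in particular $\fm_{X,P}^{e-1}\subseteq \ov{\Jac_X}$, which gives $\n_P(X)\leq e$. For the reverse inequality, I would use the divisorial valuation $v = \ord_F$ on $X$: the same computation gives $v(\fm_{X,P})=1$ and $v(\Jac_X)=e-1$, so every element of $\ov{\Jac_X}$ has $v$-order at least $e-1$. A generic linear element $L \in \fm_{X,P}$ does not vanish identically on $F$ (because, for $e \geq 2$, no linear form divides $h_e$), so $v(L)=1$ and therefore $v(L^{e-2}) = e-2 < e-1$. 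Hence $L^{e-2}\in\fm_{X,P}^{e-2}$ does not lie in $\ov{\Jac_X}$, so $\n_P(X)\geq e$, completing the equality $\n_P(X) = e = e_P(X)$.

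The main technical content is the computation of $\Jac_X\cdot\O_{\~X}$ along $F$; within that, the only nontrivial ingredient is the observation that the $\tilde g_i$ have no common zero on $F$, which is precisely the content of the semi-homogeneous assumption. Everything else is a direct application of the standard valuative and normal-modification theory of integral closure.
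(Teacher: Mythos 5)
Your proof is correct and takes essentially the same route as the paper: blow up $P$, use the smoothness of the tangent cone to deduce that the partials of $h_e$ have no common zero along the exceptional locus, conclude $\Jac_X\cdot\O_{\~X} = \O_{\~X}(-(e-1)F) = (\fm_{X,P})^{e-1}\cdot\O_{\~X}$, and pass to integral closures via the normal model; your explicit lower bound using $v = \ord_F$ on a generic linear form merely spells out what the paper leaves implicit in its final sentence. (One cosmetic slip: what you need is that $F \not\subseteq \{\ell = 0\}$, i.e.\ that $h_e$ does not divide $\ell$ --- your parenthetical ``no linear form divides $h_e$'' runs the divisibility the wrong way --- but the claim is immediate for generic $L$ in any case.)
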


\begin{proof}
Let for short $m := e_P(X)$. 
Let $f \colon \~X \to X$ and $g\colon \~\A^n \to \A^n$ be the blow-ups of 
$X$ and $\A^n$ at $P$, and let $F$ and $G$ be the respective exceptional divisors. 
Then $\~X \subset \~\A^n$ is the proper transform of $X$ and $g^*X = \~X + m G$.
If $(x_1,\dots,x_n)$ are affine coordinates centered at $P$,
and $h(x_1,\dots,x_n) = 0$ is an equation defining $X$, then
$\mult_P(h) = m$, and thus $\mult_P(\de h/ \de x_i) = m-1$. 
By hypothesis, $F = \~X \cap G$ is a smooth hypersurface of degree $m$ in $G \cong \P^{n-1}$, 
defined by the vanishing of the degree $m$ homogeneous form $h_m$ of $h$. 
It follows that the homogeneous ideal 
\[
\Big(\frac{\de h_m}{\de x_1},\dots,\frac{\de h_m}{\de x_n}\Big) \subset \C[x_1,\dots,x_n]
\]
has no zeroes in $\P^{n-1}$. 
This implies that  $\Jac_X \.\,\O_{\~X} = \O_{\~X}(-(m-1)F)$,
and thus $\ov{\Jac_X} = f_*\O_{\~X}(-(m-1)F)$.
The assertion follows then by the fact that $(\fm_{X,P})^k \.\, \O_{\~X} = \O_{\~X}(-kE)$.
\end{proof}

The Jacobian ideal retains important information of a singularity. 
For instance, it is a theorem of Mather and Yau \cite{MY82} that, 
for an isolated hypersurface singularity $P \in X$,
the Jacobian $\C$-algebra $\O_{X,P}/\Jac_X$ determines 
the analytic isomorphism class of the singularity.
The dimension of this algebra 
is called the \emph{Tyurina number} of the singularity. 
If, as above, $X$ is defined by $h(x_1,\dots,x_n) = 0$ in $\A^n$
and $P = (0,\dots,0)$, then the Tyurina number is given by 
\[
\t_P(X) := \dim_\C \frac{\C[[x_1,\dots,x_n]]}
{\big(h , \frac{\de h}{\de x_1},\dots,\frac{\de h}{\de x_n}\big)}.
\]
The Tyurina number is closely related to the \emph{Milnor number} of the singularity, 
which is the number of spheres in the bouquet homotopically
equivalent to the Milnor fiber and is computed by the dimension
\[
\m_P(X) := \dim_\C \frac{\C[[x_1,\dots,x_n]]}
{\big(\frac{\de h}{\de x_1},\dots,\frac{\de h}{\de x_n}\big)}.
\]
For every $i$, we define the \emph{$i$-th Tyurina number} $\t^{(i)}_P(X)$
and the \emph{$i$-th Teissier--Milnor number} $\m^{(i)}_P(X)$
of $X$ at $P$ to be, respectively, the Tyurina number and the Milnor number 
of a general complete intersection
of codimension $i$ passing through $P$.\footnote{The reader is cautioned that the notation adopted here
differs with the notation originally used by Teissier \cite{Tei73} where the index $i$
refers to the dimension of the projective subspace cutting out the section, 
rather than the codimension of the section in $X$.}
For $i=0,1,2$, we just write
$\t_P(X),\t_P'(X),\t_P''(X)$ and $\m_P(X),\m_P'(X),\m_P''(X)$.

\begin{prop}
\label{p:Tyurina}
With the above notation, we have
\[
\n_P(X) \le \t_P(X) + 1
\] 
\end{prop}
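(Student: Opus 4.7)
The plan is to prove the sharper statement $(\fm_{X,P})^{\tau_P(X)} \subset \Jac_X$, which immediately gives $\nu_P(X) \le \tau_P(X)+1$ since $\Jac_X \subset \overline{\Jac_X}$. The key observation is that the Tyurina number equals the length of the Jacobian algebra $M := \O_{X,P}/\Jac_X$ as an $\O_{X,P}$-module (because the residue field is $\C$), so $M$ is a finite-length module to which Nakayama's lemma applies.

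First I would set $\fm = \fm_{X,P}$ and consider the descending chain of submodules
\[
M \supset \fm M \supset \fm^2 M \supset \cdots
\]
where $\fm^i M = (\fm^i + \Jac_X)/\Jac_X$. So long as $\fm^i M \supsetneq \fm^{i+1} M$, the length drops by at least one, so after at most $\tau_P(X)$ strict inclusions the chain must terminate in the sense that $\fm^{i+1} M = \fm^i M$ for some $i \le \tau_P(X)$. Nakayama's lemma then forces $\fm^i M = 0$, and hence $\fm^{\tau_P(X)} M = 0$ in all cases. Unwinding, this says $\fm^{\tau_P(X)} \subset \Jac_X$, which is the desired containment.

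There is really no hard step here: the proof reduces to a length count combined with Nakayama. The only thing to verify is the identification $\mathrm{length}_{\O_{X,P}}(M) = \dim_\C M = \tau_P(X)$, which is standard since $\O_{X,P}$ is local with residue field $\C$. Notice that integral closure plays no role in the bound, but this is consistent with the way $\nu_P(X)$ is used elsewhere in the paper: the integral closure is only needed in order to exploit Teissier's Idealistic Bertini theorem (as in Proposition~\ref{p:restr}), not for the estimates that compare $\nu_P(X)$ to other invariants of the singularity.
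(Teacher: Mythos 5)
Your proof is correct, and it proves more than the paper does: the containment $(\fm_{X,P})^{\tau_P(X)} \subset \Jac_X$ itself, with integral closure playing no role. The paper's proof goes the contrapositive way and is forced through the valuative criterion for integral closure: by minimality of $\nu := \nu_P(X)$, a power of $\fm_{X,P}$ fails to lie in $\ov{\Jac_X}$, so there is a divisorial valuation $v$ centered at $P$ with $(\nu-1)\, v(\fm_{X,P}) < v(\Jac_X)$, and this valuation is then used to certify that the ideals $\fq_k := \fm_{X,P}^k + \Jac_X$ form a strictly decreasing chain, whose length bounds $\tau_P(X)$ from below. Note that your chain $\fm^i M$, with $M = \O_{X,P}/\Jac_X$, is exactly the paper's chain $\fq_i$ read modulo $\Jac_X$; what differs is the mechanism detecting strictness. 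The paper needs a valuation because its hypothesis (non-containment in $\ov{\Jac_X}$) is valuative in nature, while you work directly with $\Jac_X$ and can invoke Nakayama to convert stabilization of the chain into vanishing, yielding the sharper unconditional bound $\fm_{X,P}^{\tau_P(X)} \subset \Jac_X$. Your route is more elementary and arguably cleaner; the paper's fits the surrounding section, where integral closures are the objects actually manipulated (e.g.\ via Teissier's Idealistic Bertini theorem in Proposition~\ref{p:restr}). Two points worth making explicit in your write-up: Nakayama applies because $M$ has finite length (so each $\fm^i M$ is finitely generated), and the identification $\dim_\C M = \tau_P(X)$ uses that the singularity is isolated, so that $\bigl(h, \partial h/\partial x_1, \dots, \partial h/\partial x_n\bigr)$ is $\fm$-primary and the local quotient agrees with the formal one appearing in the definition of the Tyurina number.
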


\begin{proof}
Let for short $\n := \n_P(X)$.
By definition, we have $(\fm_{X,P})^{\n - 2} \not\subset \ov{\Jac_X}$.
In view of the valuative interpretation of integral closure, 
this means that there is a divisorial valuation $v$ on the function field of $X$, 
with center $P$, such that
\[
(\n - 2)\. v(\fm_{X,P}) < v(\Jac_X).
\]
Consider the sequence of ideals $\fq_k := (\fm_{X,P})^k + \Jac_X \subset \O_X$. 
Since $v(\fq_k) = k\.v(\fm_{X,P})$ for $1 \le k \le \n - 2$, we have
a chain of strict inclusions of ideals
\[
\O_X \supsetneq \fq_1 \supsetneq \fq_2 \supsetneq \dots 
\supsetneq \fq_{\n-2} \supsetneq \Jac_X.
\]
This implies that $\t_P(X) \ge \n - 1$.
\end{proof}

\begin{rmk}
The inequality in Proposition~\ref{p:Tyurina} may look weak at a first glance, and in fact
much stronger inequalities hold in many 
cases (for instance, for semi-homogeneous singularities).
The inequality is however optimal as stated. 
Examples where equality is achieved for each possible value of $\n_P$
are given by the hypersurfaces 
$X_d = (x_1^2 + \dots + x_{n-1}^2 + x_n^d = 0) \subset \A^n$, $d \ge 1$, for which
$\n_P(X_d) = d$ and $\t_P(X_d) = d-1$, $P$ being the origin in $\A^n$.
\end{rmk}

\section{Proofs of the corollaries}
\label{s:cor}

In this short section we prove the three corollaries stated in the introduction.

\begin{proof}[Proof of Corollary~\ref{c:ordinary}]
By Proposition~\ref{p:ordinary}, $P \in V$ is a singularity of type $(0,e_P(X))$
for a general complete intersection $X \subset V$
of codimension two passing through $P$. 
Since $e_P(X) = e_P(V)$, the corollary follows 
directly from Theorem~\ref{t:main}.  
\end{proof}

\begin{proof}[Proof of Corollary~\ref{c:Tyurina}]
Let $P \in V$ be one of the singularities of $V$, and fix $i \in \{0,1,2\}$ 
such that $\t_P^{(i)}(V) \le 2\sqrt N - 8$. 
If $V^{(i)} \subset V$ denotes a general
complete intersection of codimension $i$ through $P$, then we have
$\n_P(V^{(i)}) \le 2\sqrt N - 7$ by Proposition~\ref{p:Tyurina}.
Since $i \le 2$, it follows by Proposition~\ref{p:restr} that if $V'' \subset V$
is a general complete intersection of codimension two then
$\n_P(V'') \le  2\sqrt N - 7$. Then the corollary follows from Theorem~\ref{t:main}.
\end{proof}

\begin{proof}[Proof of Corollary~\ref{c:dual}]
Let $P_1,\dots,P_s \in V$ be the singular points. 
It is proven in \cite[Appendice~II.3]{Tei80} that the dual variety has degree
\[
\deg\check V = N(N-1)^{N-1} - \sum_{j=1}^s (\m_{P_j}(V) + \m'_{P_j}(V)).
\]
It follows by our assumption of the degree of $\check V$ that
\[
\sum_{j=1}^s (\m_{P_j}(V) + \m'_{P_j}(V)) \le 4\sqrt N + 2 s - 18.
\]
Bearing in mind that, for every $j$, both $\m_{P_j}(V)$ and $\m'_{P_j}(V)$ are positive 
integers, we deduce that 
$\m_{P_j}(V) + \m'_{P_j}(V) \le 4\sqrt N -16$ for any given $j$, and hence
\[
\min\{\,\m_{P_j}(V),\, \m'_{P_j}(V)\,\} \le 2\sqrt N - 8.
\]
Since $\t^{(i)}_{P_j}(V) \le \m^{(i)}_{P_j}(V)$, we conclude by Corollary~\ref{c:Tyurina}.
\end{proof}

\section{Log discrepancies and multiplicities}

In this section we review some results related to singularities of pairs and multiplicities. 
General references on the subject are \cite{KM98,Laz04}.

Let $X$ be a variety, and
let $E$ be a prime divisor on a resolution of singularities $f \colon X' \to X$.
We say that $E$ is a \emph{divisor over} $X$;
the image of $E$ in $X$ is called the \emph{center} of $E$.
When $X$ is normal, we say that the divisor $E$ is \emph{exceptional over $X$} if its center
has codimension $\ge 2$ in $X$.

The divisor $E$ defines a valuation $\val_E$ over $X$, with valuation ring $\O_{X',E}$. 
If $Z \subset X$ is a proper closed
subscheme and $I_Z \subset \O_X$ is its ideal sheaf, 
then we set $\val_E(Z) := \val_E(I_Z)$. 
If $Z = \sum c_iZ_i$ if a finite formal $\Q$-linear combination of 
proper closed subschemes $Z_i \subset X$, then we denote
$\val_E(Z) := \sum c_i\val_E(Z_i)$. 

We will use the following basic fact without further notice.
We refer to \cite[Lemma~2.3]{dFM} for a proof.

\begin{lem}
Let $X \to Y$ be a dominant morphism of varieties. 
If $E$ is a divisor over $X$, then the restriction of $\val_E$ to $\C(Y)$
is a valuation of the form $q\val_F$ for some divisor $F$ over $Y$
and some positive integer $q$. 
\end{lem}

We consider \emph{pairs} of the form $(X,Z)$ where $X$ is a variety and
$Z = \sum c_i Z_i$ is a finite, formal 
$\Q$-linear combination of proper closed subschemes $Z_i \subset X$. 
The pair is said to be \emph{effective} if $c_i \ge 0$ for all $i$. 

We say that a variety $X$, or a pair $(X,Z)$, is \emph{$\Q$-Gorenstein} 
if $X$ is normal and the canonical class $K_X$ of $X$ is $\Q$-Cartier.
The \emph{log discrepancy}
of a $\Q$-Gorenstein pair $(X,Z)$ along $E$ is defined to be
\[
a_E(X,Z) := \ord_E(K_{X'/X}) + 1 - \val_E(Z),
\]
where $K_{X'/X}$ is the relative canonical divisor.
If $Z=0$, then we drop it from the notation and write $a_E(X)$.  
A $\Q$-Gorenstein pair $(X,Z)$ is \emph{log canonical} (resp., \emph{log terminal})
if $a_E(X,Z) \ge 0$ (resp., $a_E(X,Z) > 0$) for every prime divisor $E$ over $X$. 
The pair is \emph{canonical} (resp., \emph{terminal})
if $a_E(X,Z) \ge 1$ (resp., $a_E(X,Z) > 1$) for every $E$ exceptional over $X$.

A \emph{log resolution} of a pair $(X,Z)$ is a resolution
$f \colon X' \to X$ such that the exceptional locus $\Ex(f)$ of $f$ 
and each subscheme $f^{-1}Z_i \subset X'$ is a Cartier divisor, and their
sum $\Ex(f) + \sum f^{-1}Z_i$ has simple normal crossing support.
If $Z = \sum c_iZ_i$, then we denote $f^{-1}Z := \sum c_if^{-1}Z_i$. 
If $(X,Z)$ is an effective $\Q$-Gorenstein pair, then one defines the
\emph{multiplier ideal} of $(X,Z)$ to be the ideal sheaf
\[
\J(X,Z) := f_*\O_{X'}(\ru{K_{X'/X} - f^{-1}Z}),
\]
where the round-up in the right-hand side is taken componentwise.
The definition is independent of the choice of resolution.

\begin{thm}
[\protect{\cite[Theorem~9.4.17]{Laz04}}]
\label{t:Nadel}
Let $(X,cZ)$ be an effective $\Q$-Gorenstein pair where $Z$ is a subscheme and $c \ge 0$.
Let $L$ and $A$ be Cartier divisors such that 
$\O_X(A) \otimes I_Z$ is globally generated and
$L - (K_X + cA)$ is nef and big. Then 
\[
H^i(X,\J(X,cZ) \otimes \O_X(L)) = 0 \for i > 0.
\]
\end{thm}

The \emph{minimal log discrepancy} of a $\Q$-Gorenstein pair 
$(X,Z)$ along a proper closed subset $T \subset X$ 
is the infimum of all log discrepancies along divisors with center in $T$, 
and is denoted by $\mld(T;X,Z)$. 
We will use the following inversion of adjunction property. 

\begin{thm}[\protect{\cite[Theorem~1.1]{EM04}}]
\label{t:inv-adj}
Consider an effective pair $(X,Z)$ where 
$X$ is a normal variety with locally complete intersection singularities
and $Z = \sum c_i Z_i$, and let
$Y \subset X$ be a normal, locally complete intersection subvariety of codimension $e$
that is not contained in $\bigcup_i Z_i$. 
Then for every proper closed subset $T \subset Y$ we have 
\[
\mld(T;X,Z+eY) = \mld(T; Y, Z|_Y).
\]
\end{thm}

The \emph{log canonical threshold} of an effective $\Q$-Gorenstein pair $(X,Z)$ is defined by
\[
\lct(X,Z):= \sup \{\, c \in \R_{\ge 0} \mid \text{$(X,cZ)$ is log canonical}\, \},
\]
(where we set $\sup \emptyset = -\infty$).
Note that, for any $c\ge 0$, $\lct(X,Z) > c$ if and only if $\J(X,cZ) = \O_X$. 
We denote by $\lct_P(X,Z)$ the log canonical threshold of $(X,Z)$ at $P$, 
defined as the the minimum of the log canonical thresholds $\lct(U,Z|_U)$
over all open neighborhoods $U$ of $P$. 

In a similar fashion, we
define the \emph{canonical threshold} of an effective $\Q$-Gorenstein pair $(X,Z)$ by
\[
\can(X,Z):= \sup \{\, c \in \R_{\ge 0} \mid \text{$(X,cZ)$ is canonical}\, \}.
\]
Note that $\can(X,Z) > 0$ if $X$ has terminal singularities.
We denote by $\can_P(X,Z)$ the canonical threshold of $(X,Z)$ at $P$.

A \emph{Mori fiber space} is a normal projective variety $X$
with $\Q$-factorial terminal singularities, equipped with an
extermal Mori contraction of fiber type $g \colon X \to S$ 
(so that $\dim S < \dim X$, $g_*\O_X = \O_S$, $\rk\Pic(S) = \rk \Pic(X) - 1$, 
and $-K_X$ is relatively ample over $S$). 
A Mori fiber space is said to be \emph{birationally superrigid}
if there are no birational maps to other Mori fiber spaces
other than isomorphisms.

The following result, known as the Noether--Fano inequality,
is central for the method of maximal singularities. 
The result is essentially due to \cite{IM71}. A proof using the minimal model program
is given in \cite{Cor95}; see also \cite{dF14} for a short, self-contained proof.

\begin{thm}
\label{t:NF}
Let $X$ be a Fano variety of Picard number 1 with
terminal $\Q$-factorial singularities.
Suppose that there is a birational map $\f \colon X \rat X'$ where
$X'$ is a Mori fiber space.
Fix an embedding $X' \subset \P^m$, and let $\cH := \f_*^{-1}|\O_{X'}(1)|$
be the linear system on $X$ giving the map $X \rat X' \inj \P^m$. 
Let $B(\cH) \subset X$ be the base scheme of $\cH$, and
let $r$ be the rational number such that $\cH \subset |-rK_X|$. 
If $\f$ is not an isomorphism, then 
\[
\can(X,B(\cH)) < 1/r.
\]
\end{thm}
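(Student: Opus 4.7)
The plan is to argue by contradiction: assume $\can(X, B(\cH)) \ge 1/r$, equivalently that the pair $(X, \tfrac{1}{r} B(\cH))$ is canonical, and aim to derive that $\f$ must be an isomorphism of Mori fiber spaces. (This is the form in which the statement is applied in the proof of birational superrigidity, where one has a candidate $\f$ that is a priori not an isomorphism; so the contradiction comes from whatever precludes $\f$ from being trivial.)

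First I would resolve the indeterminacy of $\f$ by choosing birational morphisms $p \colon Y \to X$ and $q \colon Y \to X'$ from a smooth common model $Y$. Writing $p^* \cH = \~\cH + F$, with $\~\cH$ the strict transform of $\cH$ and $F \ge 0$ a $p$-exceptional divisor (possible because $\cH$ is movable), and using $\cH \sim_\Q -r K_X$, the standard expansion yields
\[
K_Y + \tfrac{1}{r}\~\cH \;\sim_\Q\; p^*\bigl(K_X + \tfrac{1}{r}\cH\bigr) + \sum_{E}\bigl(k_E(X) - \tfrac{1}{r}\val_E(B(\cH))\bigr) E,
\]
with the sum running over $p$-exceptional prime divisors $E$. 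Because $K_X + \tfrac{1}{r}\cH \sim_\Q 0$, the pullback term vanishes; the canonicity assumption then forces each coefficient $k_E(X) - \tfrac{1}{r}\val_E(B(\cH)) = a_E(X,\tfrac{1}{r}B(\cH)) - 1$ to be non-negative. Hence $K_Y + \tfrac{1}{r}\~\cH$ is $\Q$-linearly equivalent to an effective, $p$-exceptional divisor.

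Next I would bring in the Mori fiber structure $\pi \colon X' \to S'$. Since $\rk\Pic(X'/S') = 1$ and both $-K_{X'}$ and $\cH'$ are $\pi$-ample, there is a rational number $r' > 0$ and a $\Q$-divisor $D'$ on $S'$ with $K_{X'} + \tfrac{1}{r'}\cH' \sim_\Q \pi^* D'$. Pushing the effective relation of the previous step forward by $q$ shows that $K_{X'} + \tfrac{1}{r}\cH'$ is $\Q$-linearly equivalent to an effective divisor on $X'$; restricting to a general smooth fiber $F'$ of $\pi$ and using the relation defining $r'$ rewrites this as the pseudoeffectivity on $F'$ of $\bigl(\tfrac{1}{r} - \tfrac{1}{r'}\bigr)\cH'|_{F'}$. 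Since $\cH'|_{F'}$ is ample, this forces $r \le r'$.

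The hard part will be converting this numerical inequality, together with the effective $p$-exceptional expression for $K_Y + \tfrac{1}{r}\~\cH$, into the geometric conclusion that $\f$ is an isomorphism of Mori fiber spaces. Following Corti's reinterpretation of the Iskovskikh--Manin method, one carries this out by a Sarkisov-program-style analysis: the $p$-exceptional divisors with positive coefficient in $K_Y + \tfrac{1}{r}\~\cH$ are shown to have log-discrepancy exactly one for $(X,\tfrac{1}{r}B(\cH))$, and then running a suitable $(K_Y+\tfrac{1}{r}\~\cH)$-MMP relative to $X'$ contracts them and matches the Mori fiber structure of $X'$ with the tautological one on $X$; here $\rho(X) = 1$ and terminal $\Q$-factoriality are used in an essential way, since they collapse the Sarkisov links to the identity. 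The discrepancy computations above are routine, but this final identification step is the technical heart of the argument.
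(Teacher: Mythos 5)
You should first note a point of reference: the paper does not prove this theorem at all --- it is quoted as background and attributed to \cite{IM71,Cor95} --- so your attempt has to be measured against the classical argument. Your Steps 1 and 2 reproduce its routine opening correctly: canonicity of $(X,\tfrac{1}{r}B(\cH))$ together with $K_X+\tfrac{1}{r}\cH\sim_\Q 0$ gives $K_Y+\tfrac{1}{r}\~\cH\sim_\Q E$ with $E\ge 0$ and $p$-exceptional, and pushing forward by $q$ and restricting to a general fiber gives $r\le r'$. But $r\le r'$ contradicts nothing by itself, and everything that makes the theorem true is deferred to an unexecuted ``Sarkisov-style MMP''. The decisive identity you never write down is the one on the $X'$ side: since $\cH'$ is free, its general members avoid the generic points of the centers of the $q$-exceptional divisors, so $\~\cH=q^*\cH'$ and hence
\[
K_Y+\tfrac{1}{r}\~\cH \;=\; q^*\big(K_{X'}+\tfrac{1}{r}\cH'\big)+K_{Y/X'},
\]
where $K_{Y/X'}\ge 0$ has strictly positive coefficient along \emph{every} $q$-exceptional divisor because $X'$ is terminal. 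Comparing with your $p$-side identity, $E-K_{Y/X'}\sim_\Q q^*(K_{X'}+\tfrac{1}{r}\cH')$ is $q$-numerically trivial with effective $q$-pushforward, so the Negativity Lemma yields $E\ge K_{Y/X'}$; in particular every $q$-exceptional divisor is $p$-exceptional. When $S'$ is a point one upgrades this, using $\rho(X)=\rho(X')=1$ (push forward by $p$ to force $q_*E=0$ and $K_{X'}+\tfrac{1}{r}\cH'\sim_\Q 0$, then apply negativity over $p$ as well), to $E=K_{Y/X'}$; counting exceptional divisors via $\Q$-factoriality then shows $\f$ is an isomorphism in codimension one, and $X\cong\Proj\bigoplus_m H^0(X,\O_X(-mrK_X))\cong\Proj\bigoplus_m H^0(X',\O_{X'}(-mrK_{X'}))\cong X'$ shows $\f$ is an isomorphism --- the desired contradiction, since the statement (as it is applied in the paper) implicitly presupposes that $\f$ is not an isomorphism, and indeed is false without that hypothesis because then $B(\cH)=\emptyset$. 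None of this machinery appears in your write-up, and without $\~\cH=q^*\cH'$ your proposed MMP has nothing to compare the two sides with.

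There are two further defects in the endgame you sketch. When $\dim S'\ge 1$, the goal ``show $\f$ is an isomorphism of Mori fiber spaces'' is unattainable in principle, because $\rho(X)=1$ while $\rho(X')=1+\rho(S')\ge 2$; in this case the contradiction must be produced directly, and your $r\le r'$ is perfectly consistent with everything, so your plan provides no mechanism at all for it. Classically this case is handled by a separate direct argument: for instance, one brings in the auxiliary mobile system $\cM:=\f_*^{-1}|\pi^*A'|$ with $A'$ ample on $S'$, which on $X$ satisfies $\cM\sim_\Q -sK_X$ with $s>0$ (Picard number one) while its strict transform on $Y$ is $q^*\pi^*A'$, trivial on the fibers of $Y\to S'$; restricting the resulting exceptional-effectivity relation to the generic fiber produces an effective canonical class on a uniruled variety, which is absurd (Iskovskikh--Manin and Pukhlikov run an equivalent test-class intersection computation). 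Finally, your claim that the $p$-exceptional divisors with positive coefficient ``are shown to have log-discrepancy exactly one'' is backwards: the coefficient of $E_i$ in $E$ is $a_{E_i}(X,\tfrac{1}{r}B(\cH))-1$, so a positive coefficient means log discrepancy strictly greater than one; the crepant divisors are precisely those with coefficient zero, and it is they that obstruct naive contraction arguments. So the proposal correctly sets up the standard discrepancy bookkeeping but omits the actual content of the Noether--Fano inequality in both target cases.
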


We now turn to a variant (and more general) notion of log discrepancy, called
Mather log discrepancy. 
While the usual log discrepancy is defined by comparing canonical divisors, 
this variant is defined by comparing sheaves of
K\"ahler differentials. 

Let $X$ be a variety of dimension $n$.
Let $f \colon X' \to X$ be a resolution of singularities, and let
$\Jac_f := \Fitt^0(\Om_{X'/X}) \subset \O_{X'}$ be the Jacobian ideal of the map. 
For every prime divisor $E$ on $X'$, we define the \emph{Mather log discrepancy} of a pair $(X,Z)$
along a prime divisor $E$ over $X$ to be
\[
\^a_E(X,Z) := \ord_E(\Jac_f) + 1 - \val_E(Z).
\]
If $Z=0$, then we simply write $\^a_E(X)$. 

\begin{rmk}
\label{r:^k_E-lci}
If $X$ has locally complete intersection singularities, then
$\^a_E(X) = a_E(X) + \val_E(\Jac_X)$ (cf.\ \cite[Corollary~3.5]{dFD}).
\end{rmk}

The \emph{minimal Mather log discrepancy} of a pair $(X,Z)$ along a proper closed subset $T \subset X$ 
is the infimum of all Mather log discrepancies along divisors with center in $T$, 
and is denoted by $\^\mld(T;X,Z)$. The reader is cautioned that in general minimal Mather
log dicrepancies do not satisfy an inversion of adjunction theorem analogous to
Theorem~\ref{t:inv-adj}.

\begin{prop}
\label{p:^lct}
If $P \in X$ is a closed point on a variety $X$ of dimension $n$, 
then we have $\^\mld_P(X,nP) \ge 0$.
\end{prop}

\begin{proof}
Let $E$ be an arbitrary divisor over $X$ with center $P$.
Let $\p \colon \A^N \to Y := \A^n$ be a general linear projection, and let 
$Q := \p(P)$. We have $\val_E|_{\C(Y)} = q\val_F$ where
$F$ is a divisor over $Y$ with center $Q$ and $q$ is a positive integer. 
By taking the projection general enough, we can ensure that
\begin{equation}
\label{eq:^lct-1}
\val_E(P) = q\val_F(Q).
\end{equation}
We can assume that there is a diagram
\[
\xymatrix{
X' \ar[d]_g \ar[r]^f & X \ar[d] \ar@{^(->}[r] & \A^N \ar[d]^\p \\
Y' \ar[r] & Y \ar@{=}[r] & \A^n 
}
\]
where $X' \to X$ and $Y' \to Y$ are resolutions such that $E$ is a divisor on $X'$, 
and $F$ is a divisor on $Y'$. Note that $\ord_E(g^*F) = q$ and $\ord_E(K_{X'/Y'}) = q-1$. 
Denoting by $h \colon X' \to Y$ the composition of $f$ with the projection to $Y$, 
we have $\ord_E(K_{X'/Y}) = \val_E(\Jac_h)$. 
If $x_1,\dots,x_n$ are local parameters in $X'$ centered at a general point of $E$, 
then $f$ is locally given by equations $y_i = f_i(x_1,\dots,x_n)$, and 
$\Jac_f$ is locally defined by the $n\times n$ minors of the matrix
$(\de f_i/\de x_j)$. For a linear projection $\p \colon \A^N \to Y = \A^n$, 
$\Jac_h$ is locally defined by a linear combination of 
the $n\times n$ minors of $(\de f_i/\de x_j)$. 
If the projection is general, then so is the linear combination, and we have
$\^a_E(X) = \val_E(K_{X'/Y})+1$.
Writing $K_{X'/Y} = K_{X'/Y'} + g^*K_{Y'/Y}$, we get
\begin{equation}
\label{eq:^lct-2}
\^a_E(X) = \val_E(K_{X'/Y'}) + \val_E(g^*K_{Y'/Y}) + 1 = q\,a_F(Y).
\end{equation}
Since $E$ is an arbitrary divisor over $X$ with center $P$, and $q \ge 1$, 
we deduce from \eqref{eq:^lct-1} and \eqref{eq:^lct-2} that
$\^\mld_P(X,nP) \ge \mld_Q(Y,nQ)$. Then the proposition follows by
observing that, since $Y$ is smooth of dimension $n$, we have $\mld_Q(Y,nQ) = 0$.
\end{proof}

We will use the following result from \cite{dFM}, stated here in a special case.

\begin{thm}[\protect{\cite[Theorem~2.5]{dFM}}]
\label{t:projection}
Let $X \subset \A^N$ be a Cohen--Macaulay variety of dimension $n$, 
and let $E$ be a divisor over $X$. 
Let $Z \subset X$ closed subscheme of pure codimension $k$
whose ideal in $X$ is locally generated by a regular sequence. 
Then let 
\[
\f \colon X \to \A^{n-k+1} 
\]
be the morphism induced by restriction
of a very general linear projection $\s \colon \A^N \to \A^{n-k+1}$,
so that $\f|_Z$ is a proper finite morphism
and $\f_*[Z]$ is a cycle of codimension one in $\A^{n-k+1}$.
Regard $\f_*[Z]$ as a Cartier divisor on $\A^{n-k+1}$. 
Write $\val_E|_{\C(\A^{n-k+1})} = q \val_G$ where $G$ is a divisor over $\A^{n-k+1}$ 
and $q$ is a positive integer. Then, for every $c > 0$ such that $\^a_E(X,cZ) \ge 0$, we have
\[
q\, a_G\(\A^{n-k+1}, \frac{c^k}{k^k}\, \f_*[Z]\) \le \^a_E(X,cZ).
\]
\end{thm}

We end this section by recalling some properties of multiplicities. 
If $Z$ is a scheme and $\x \in Z$ is a 
(non necessarily closed) point, then the \emph{multiplicity} of $Z$ at $\x$ is 
defined to be the 
Hilbert--Samuel multiplicity of the maximal ideal of $\O_{Z,\x}$
and is denoted by $e_\x(Z)$ (cf.\ \cite[Example~4.3.4]{Ful98}).
If $T \subset Z$ is the closure of $\x$, then we also denote 
this multiplicity by $e_T(Z)$.

If $Z$ is pure-dimensional, then
the function $P \mapsto e_P(Z)$ is upper-semicontinuous on closed points
(cf. \cite[Theorem~(4)]{Ben70}), and 
we have $e_T(Z) = \min_{P \in T}e_P(Z)$ for any subvariety $T \subset Z$.
Here the minimum is taken over the closed points $P$ of $T$, and is achieved
for all points of a dense open subset of $T$. 
If $Z$ is a complete intersection subscheme of a variety $X$, and $T \subset Z$
is an irreducible component, then $e_T(Z)$ is the same as the Hilbert--Samuel 
multiplicity of the ideal of $Z$ in $\O_{X,T}$ (cf.~\cite[Exercise 7.1.10(a)]{Ful98}).

The definition of multiplicity extends in a natural way to cycles. 
If $\a = \sum n_i[Z_i]$ is a cycle on a variety $X$
(here each $Z_i$ is a pure-dimensional subscheme of $X$, without embedded points), 
and $T \subset X$ is a subvariety, then we define $e_T(\a) := \sum n_i e_T(Z_i)$,
where we set $e_T(Z_i) = 0$ whenever $T \not\subset Z_i$. This is well defined
(i.e., it does not depend on the way we write the cycle, cf.\ \cite[Example~4.3.4]{Ful98}).

Proofs of the following two basic properties can be found in \cite[Section~8]{dF}.

\begin{prop}\label{prop:mult-canonical}
Let $D$ be an effective $\Q$-divisor on a smooth variety $X$,
and suppose that $a_E(X,D) \le 1$ for some prime divisor $E$
over $X$. If $P$ is any point in the center of $E$ in $X$, then $e_P(D) \ge 1$.
\end{prop}

\begin{prop}
\label{p:restr-mult}
Let $Z$ be a pure-dimensional Cohen--Macaulay subscheme of $\P^n$,  
and let $\cH \subset (\P^n)^\vee$ be a hyperplane.
Then for any general $H \in \cH$ we have 
$e_P(Z \cap H) = e_P(Z)$ for every $P \in Z \cap H$.
\end{prop}

We close this section with the following property, due to Pukhlikov.

\begin{prop}[\protect{\cite[Proposition~5]{Puk02a}}]
\label{prop:Puk}
Let $X \subset \P^{n+1}$ be a hypersurface, and let 
$\a$ be an effective cycle on $X$ of pure codimension $k \le n/2$. 
Assume that $\a \equiv m\,c_1(\O_X(1))^k$ for some $m \in \N$. 
Then $e_S(\a) \le m$ for every closed 
subvariety $S \subseteq X$ of dimension $\dim S \ge k$ not meeting the singular locus of $X$.
In particular, if $d = \dim \Sing (X)$, then we have $e_T(\a) \le m$ for every closed 
subvariety $T \subseteq X$ of dimension $\dim T \ge d + 1 +k$.  
\end{prop}

\begin{rmk}
The statement of \cite[Proposition~5]{Puk02a} is only given for $k < n/2$, 
but the proof can be extended to include the case
$k = n/2$ (cf.~\cite[Remark~4.4]{dFEM03}). 
\end{rmk}

\section{Proof of Theorem~\ref{t:main}}

We start observing the following general property.

\begin{lem}
\label{l:V}
Any normal hypersurface $V \subset \P^N$ whose singular locus has codimension at 
least 4 is factorial. 
\end{lem}

\begin{proof}
If $\dim V \le 3$ then $V$ is smooth and hence factorial.
Assume then that $\dim V \ge 4$.
The hypersurface $W \subset \P^4$ cut out by $V$
on a general linear 4-space $\P^4 \subset \P^N$ is smooth. 
By the Lefschetz hyperplane theorem, both $\Pic(V)$ and
$\Pic(W)$ are generated by the respective hyperplane classes, 
and so the restriction map $\Pic(V) \to \Pic(W)$ is an isomorphism. 
Since $W$ is smooth, the class map $\Pic(W) \to \Cl(W)$ is an isomorphism.  
On the other hand, the restriction of Weil divisors (which is
well-defined in our setting) induces an isomorphism $\Cl(V) \to \Cl(W)$
by an inductive application of \cite[Theorem~1]{RS06}. It follows that $\Pic(V) \to \Cl(V)$
is an isomorphism. 
\end{proof}

Theorem~\ref{t:main}, whose proof is postponed to the end of the section, 
will be deduced from the following theorem.

\begin{thm}
\label{t:bound}
Fix integers $N,\d,\n,r$ such that $\n,r \ge 1$ and 
\[
-1 \le \d \le \frac N2 -3.
\]
Let $V \subset \P^N$ be a normal hypersurface of degree $N$ with a singularity of type $(\d,\n)$
at a point $P$ and with singular locus of dimension at most $\d$. 
Let $B \subset V$ be a proper closed subscheme of
codimension at least 2, and assume that the sheaf
$\O_V(r)\otimes I_B$ is globally generated. Then
\[
r \can_P(V,B) \ge  \min \left\{ 1 , \, \frac {2(N+1)}{(2\d+\n+7)\sqrt N} \right\}.
\]
\end{thm}

\begin{proof}
Since $N-\d \ge 5$, $V$ is factorial by Lemma~\ref{l:V}.

After replacing $B$ with the intersection of two general members of 
$D, D' \in |\O_V(r)\otimes I_B|$, we reduce to prove the theorem when $B = D \cap D'$
is a codimension 2 complete intersection subscheme of $V$, cut out by two divisors
$D,D' \in |\O_V(r)|$. We denote
\[
c := \can_P(V,B), 
\]
and henceforth assume that $c < 1/r$. 

Note that $N \ge 4$. Since the singular locus of $V$
has at most dimension $\d$, Proposition~\ref{prop:Puk} implies that for every closed
subvariety $T \subset V$ of dimension $\dim T \ge \d+2$ we have $e_T(D) \le r$.
It follows by Proposition~\ref{prop:mult-canonical} that the pair $(V,cB)$
has terminal singularities away from a set of dimension $\d+1$.

We cut down by $\d + 1$ general hyperplanes through $P$. 
Let $\P^{N-\d-1} \subset \P^N$
be a general linear subspace of codimension $\d+1$
passing through $P$, and let 
$W \subset \P^{N-\d-1}$ be the restriction of $V$ to this subspace. 
By inversion of adjunction (Theorem~\ref{t:inv-adj}), $(W,cB|_W)$ is terminal 
away from finitely many points, and is not canonical at $P$.
This implies that $\mld(P;W,cB|_W) \le 1$. 
Adding $P$ to the pair, we get 
\begin{equation}
\label{eq:mld(W)}
\mld(P;W,cB|_W + P) \le 0.
\end{equation}

We take one more hyperplane section. 
Let $\P^{N-\d-2} \subset \P^{N-\d-1}$ be a general hyperplane through $P$, and let 
\[
X \subset \P^{N-\d-2}
\] 
be the restriction of $W$ to $\P^{N-\d-2}$.
We remark that, under our assumption on $\d$ and $N$, we have $\dim X \ge 2$. 
By \eqref{eq:mld(W)} and inversion of adjunction, we have 
\begin{equation}
\label{eq:mld(X)}
\mld(P;X,cB|_X) \le 0.
\end{equation}
Note, on the other hand, that $(X,cB|_X)$ is log terminal in dimension one. 
In fact, we have the following stronger property. 

\begin{lem}
\label{l:1}
The pair $(X,2cB|_X)$ is log terminal in dimension one.
\end{lem}

\begin{proof}
If $N=4$, then $\d=-1$, $X$ is a smooth surface, and $B|_X$ is zero dimensional. Clearly the lemma 
holds in this case. We can therefore assume that $N \ge 5$. 

Let $C \subset X$ be any irreducible curve.

Proposition~\ref{prop:Puk} implies that for every closed
subvariety $T \subset V$ of dimension $\dim T \ge \d+3$ we have $e_T(B) \le r^2$.
This means that the set of points $Q \in V$ such that $e_Q(B) > r^2$
has dimension at most $\d+2$. Since $X$ is cut out by $\d+2$ general 
hyperplane sections of $V$ through $P$, 
it follows by Proposition~\ref{p:restr-mult} that
the set of points $Q \in X$ such that $e_Q(B|_X) > r^2$
is finite. Therefore we have $e_Q(B|_X) \le r^2$ for a general point $Q \in C$. 

Fix such a point $Q \in C$, and let $S \subset X$ be a smooth surface
cut out by general hyperplanes through $Q$. 
By applying again Proposition~\ref{p:restr-mult}, we see that $e_Q(B|_S) \le r^2$. 

Since $B|_S$ is a zero-dimensional complete intersection subscheme of $S$, 
the multiplicity $e_Q(B|_S)$ is computed by 
the Hilbert--Samuel multiplicity of the ideal $\I_{B|_S,Q} \subset \O_{S,Q}$
locally defining $B|_S$ near $Q$ (cf.~\cite[Exercise~7.1.10(a)]{Ful98}). 
Then \cite[Theorem~0.1]{dFEM04}
implies that the log canonical threshold of $(S,B|_S)$ near $Q$ satisfies the inequality
\[
\lct_Q(S,B|_S) \ge \frac{2}{\sqrt{e_Q(B|_S)}}.
\]
Since $e_Q(B|_S) \le r^2$ and $c < 1/r$, this implies that 
$\lct_Q(S,B|_S) > 2c$, and therefore
$(S,2cB|_S)$ is log terminal near $Q$. It follows by inversion of adjunction
that $(X,2cB|_X)$ is log terminal near $Q$. 
As $Q$ was chosen to be a general point of an arbitrary curve $C$ on $X$, 
we conclude that $(X,2cB|_X)$ is log terminal in dimension one. 
\end{proof}

Lemma~\ref{l:1} implies that the multiplier ideal $\J(X,2cB|_X)$
defines a zero-dimensional subscheme $\S \subset X$. 
We have $H^1(X,\J(X,2cB|_X) \otimes \O_X(\d+3)) = 0$ 
by Nadel's vanishing theorem (Theorem~\ref{t:Nadel}), and therefore there is a surjection
\[
H^0(X,\O_X(\d+3)) \surj H^0(\S,\O_\S(\d+3)) \cong H^0(\S,\O_\S)
\]
(here $\O_\S(\d+3) \cong \O_\S$ because $\S$ is zero dimensional). 
Keeping in mind that 
\[
H^0(X,\O_X(\d+3)) \cong H^0(\P^{N-\d-2},\O_{\P^{N-\d-2}}(\d+3)),
\]
it follows that
\begin{equation}
\label{eq:up1}
h^0(\S,\O_\S) \le h^0(X,\O_X(\d+3)) = \binom{N+1}{\d+3}.
\end{equation}

\begin{lem}
\label{l:2}
There is a prime divisor $E$ over $X$ with center $P$
and log discrepancy 
\begin{equation}
\label{eq:l:2}
a_E(X,cB|_X + (\d+2)P) \le 0,
\end{equation}
such that the center of $E$ on the blow-up of $X$ at $P$ has dimension $\ge \d + 2$. 
\end{lem}

\begin{proof}
Let $f \colon X' \to X$ be a log resolution of
$(X,B+P)$, and let $Y \subset X$ be a subvariety cut out by $\d+2$ general 
hyperplane sections through $P$. 
We remark that $\dim Y \ge 2$, given our assumption on $\d$ and $N$.
Let $Y' \subset X'$ be the proper transform of $Y$. By Bertini's theorem, we can ensure 
that $Y'$ intersects transversally the exceptional locus of $f$ 
(i.e., $Y'$ intersects transversally each stratum of the exceptional locus that it meets), 
and that the induced map $Y' \to Y$ is a log resolution of $(Y,B|_Y + P)$. 
By \eqref{eq:mld(X)} and 
inversion of adjunction, we have $\mld(P;Y,cB|_Y) \le 0$. This means that there is 
a prime exceptional divisor $F \subset Y'$ with center $P$ in $Y$ 
and log discrepancy $a_F(Y,cB|_Y) \le 0$. 
There is a unique prime exceptional divisor $E \subset X'$ such that $F$ is an irreducible
component of $E|_{Y'}$. Note that $E|_{Y'}$ is reduced. 
Since $E$ is the only prime divisor of $X'$ that is
contained in either supports of the inverse images of $B|_X$ and $P$
and whose restriction to $Y'$ contains $F$, we have 
$\val_E(B|_X) = \val_F(B|_Y)$ and $\val_E(P) = \val_F(P)$. 
Then the lemma follows by adjunction formula. 
\end{proof}

Let $E$ be as in Lemma~\ref{l:2}, and let
\[
\l := \frac{\val_E(P)}{c\val_E(B|_X)}.
\]

\begin{lem}
\label{l:lowerbound-lambda}
$(N+1)\l > 1$.
\end{lem}

\begin{proof}
For a general linear projection $\t \colon X \to \P^{N-\d-3}$, 
let $x_1,\dots,x_{N-\d-3} \in \fm_{X,P}$ be elements
obtained by pulling back a regular system of parameters of $\t(P) \in \P^{N-\d-3}$, 
and let $y_1,\dots,y_{\d+3} \in \fm_{X,P}$ be $\d+3$ general linear combinations
of these elements $x_1,\dots,x_{N-\d-3}$. Since $\d+3 \le N-\d-3$, 
we can assume that $y_1,\dots,y_{\d+3}$ are linearly independent. 

We claim that if $h(y_1,\dots,y_{\d+3})$ is 
any nonzero polynomial in these variables, then 
\[
\val_E(h) = \mult(h)\val_E(P).
\]
To see this, let $m := \mult(h)$, and let $h_m$ be the initial term
of degree $m$ of $h$. 
The center $C$ of $E$ in $\Bl_PX$ is contained in the exceptional divisor $E_P$ of $\Bl_PX \to X$.
Note that, by Lemma~\ref{l:2}, $C$ is a variety of dimension $\ge \d + 2$. 
By construction, there is a finite map from $E_P$
to the projective space $\Proj \C[x_1,\dots,x_{N-\d-3}]$, and 
linear projection (a rational map) from this space to $\Proj\C[y_1,\dots,y_{\d+3}]$.
If $y_1,\dots,y_{\d+3}$ are general, then $C$ dominates
$\Proj\C[y_1,\dots,y_{\d+3}]$, and therefore it cannot be contained in the
hypersurface defined by the equation $h_m(y_1,\dots,y_{\d+3}) = 0$ in $E_P$. 
Writing $E_P = \sum a_i E_i$, we have $\val_{E_i}(h) = ma_i$, and hence
\[  
\val_E(h) = \sum \val_{E_i}(h) \val_E(E_i) = \sum ma_i \val_E(E_i) = m \val_E(P),
\]
which proves our claim.

Suppose now that $d$ is a positive integer such that 
\[
d\val_E(P) \le - a_E(X,2cB|_X).
\]
The lemma implies that for every nonzero polynomial $h(y_1,\dots,y_{\d+3})$ 
of degree $\le d$ we have $\val_E(h) \le -a_E(X,2cB|_X)$, 
and therefore $h \not\in \J(X,2cB|_X)\.\O_{X,P}$. This means that if
$\cV \subset \O_{X,P}$ is the $\C$-vector space spanned by the polynomials
$h(y_1,\dots,y_{\d+3})$ of degree $\le d$, then the quotient map $\O_{X,P} \to \O_{\S,P}$
restricts to a injective map $\cV \inj \O_{\S,P}$. 
It follows that
\[
h^0(\S,\O_\S) \ge \dim_\C \cV = \binom{d+\d+3}{\d+3}.
\]
Comparing with \eqref{eq:up1}, 
we conclude that $d \le N-\d-2$, and hence we must have 
\[
(N-\d-1) \val_E(P) > - a_E(X,2cB|_X).
\]
Keeping in mind the definition of $\l$, this means that 
\[
a_E(X,(2-(N+1)\l)cB|_X + (\d+2)P) = a_E(X,2cB|_X - (N-\d-1)P) > 0.
\]
Since, on the contrary, we know that $a_E(X,cB|_X + (\d+2) P) \le 0$
by Lemma~\ref{l:2}, we conclude that $(N+1)\l > 1$, as stated. 
\end{proof}

Let $J_X \subset X$ be the subscheme defined by $\Jac_X$. 
Since $X$ has locally complete intersection singularities, 
we have $\^a_E(X) = a_E(X) + \val_E(J_X)$ by Remark~\ref{r:^k_E-lci}, and hence we have
\[
\^a_E(X,cB|_X + J_X + (\d+2)P) \le 0
\]
by \eqref{eq:l:2}.
By hypothesis, $V$ has a singularity of type $(\d,\n)$ at $P$.
It follows by Proposition~\ref{p:restr} that
$(\fm_{X,P})^{\n-1} \subset \ov{\Jac_X}$, 
and thus we have $(\n-1)\val_E(P) \ge \val_E(J_X)$. 
Therefore
\begin{equation}
\label{eq:^a_E}
\^a_E(X,cB|_X + (\d + \n + 1)P) \le 0.
\end{equation}
The inequality in \eqref{eq:^a_E} can be rewritten as follows:
\begin{equation}
\label{eq:^a_E-2}
\^a_E(X,(1-(N-2\d-\n-6)\l)cB|_X) \le (N-\d-5)\val_E(P).
\end{equation}
The next lemma implies that the pair in the right hand side of \eqref{eq:^a_E-2}
is effective. 

\begin{lem}
\label{l:upperbound-lambda}
$(N-2\d-\n-4)\l \le 1$.
\end{lem}

\begin{proof}
By the definition of $\l$ and Proposition~\ref{p:^lct}, we have
\[
\^a_E(X,(N-2\d-\n-4)\l cB|_X + (\d+\n+1)P ) = \^a_E(X,(N-\d-3)P) \ge 0.
\]
The assertion follows by contrasting this inequality with \eqref{eq:^a_E}.
\end{proof}

Let 
\[
\p \colon \P^{N-\d-2} \rat \P^{N-\d-4}
\]
be a very general linear projection. Let $Q := \p(Q)$ and
$A := \p_*[B|_X]$. Note that $A$ is a divisor on $\P^{N-\d-4}$ of degree $r^2N$. 
The divisorial valuation $\val_E$ restricts to a divisorial valuation $q\val_G$
with center $Q$ on $\P^{N-\d-4}$, where $q$ is a positive integer. 
By taking a general projection, we can ensure that $q\val_G(Q) = \val_E(P)$.
Since, by Lemma~\ref{l:upperbound-lambda}, the pair in the right hand side of \eqref{eq:^a_E-2}
is effective, and $N-\d-5 \ge 0$,  we can apply Theorem~\ref{t:projection}, which gives
\[
a_G\(\P^{N-\d-4},\frac{(1-(N-2\d-\n-6)\l)^2 c^2}{4}\, A\) \le (N-\d-5)\val_G(Q).
\]
Using again that we are dealing with effective pairs, we can apply inversion of adjunction. 
Thus, looking at the degree after restricting to a general line through $Q$, we conclude that
\[
\deg\(\frac{(1-(N-2\d-\n-6)\l)^2 c^2}{4}\, A\) \ge 1.
\]
Since $\deg A = r^2N$ and $(N+1)\l > 1$ (by Lemma~\ref{l:lowerbound-lambda}), we get
\[
rc > \frac{2(N+1)}{(2\d+\n+7)\sqrt{N}}.
\]
This completes the proof of Theorem~\ref{t:bound}. 
\end{proof}

\begin{proof}[Proof of Theorem~\ref{t:main}]
The inequality assumed in the theorem on the integers $\d,\n,N$
imply that $\d \le \frac N2 -3$, which in turns implies that $N-\d \ge 5$. 
Therefore $\d$ and $N$ satisfy the hypotheses of both Lemma~\ref{l:V}
and Theorem~\ref{t:bound}. In particular, we see that $V$ is factorial by Lemma~\ref{l:V}.
Adjunction shows that $\om_V \cong \O_V(1)$, 
hence $V$ is Fano. The Lefschetz hyperplane theorem
implies that the Picard group is generated by $\O_V(1)$.

The next step is to ensure that $V$ has terminal singularities. 
Suppose otherwise. Then there is a prime divisor $E$ over $V$ with
log discrepancy $a_E(V) \le 1$. Let $C \subset V$ be the center of $E$, and fix
a point $P \in C$. Note that $C$ has codimension $\ge 2$ in $V$ since $V$ is normal.
Let $s$ be a large enough integer so that the base locus
of the linear system $|\O_V(s)\otimes I_C|$ has codimension $\ge 2$, 
and let $Z \subset V$ be the subscheme cut out by two general members
of $|\O_V(s)\otimes I_C|$. Note that $\can_P(V,Z) \le 0$,
since $V$ is not terminal at $P$ and $P \in Z$. On the other hand, since $P$ is a singularity
of type $(\d,\n)$, Theorem~\ref{t:bound} implies that
\[
\can_P(V,Z) \ge \frac{2(N+1)}{r(2\d+\n+7)\sqrt N} > 0.
\]
This gives a contradiction, and therefore $V$ must have terminal singularities.

In particular, $V$ is a Mori fiber space (over a point), 
and it makes sense to inquire whether it is birationally superrigid. 
Suppose by contradiction that $V$ is not birationally superrigid.
Then there is a birational map $\f \colon V \rat V'$  
from $V$ to a Mori fiber space $V'$ that is not an isomorphism. 
Fix a projective embedding $V' \subset \P^m$, and let
$\cH = \f_*^{-1}|\O_{V'}(1)|$. Note that
$\cH \subset |\O_V(r)|$ for some integer $r \ge 1$.
Let $B \subset V$ be the intersection of two general members of $\cH$. 
The Noether--Fano inequality (Theorem~\ref{t:NF}) implies that the pair
$(V,B)$ has canonical threshold
\[
\can(V,B) < \frac 1r.
\]
Then Theorem~\ref{t:bound} implies that
\[
\can(V,B) \ge \frac{2(N+1)}{r(2\d+\n+7)\sqrt N}.
\]
By comparing these two inequalities, we obtain
\[
2\d+\n + 7 > \frac{2(N+1)}{\sqrt N},
\]
in contradiction with our assumptions.
We conclude that $V$ is birationally superrigid.
\end{proof}

\begin{bibdiv}
\begin{biblist}

\bib{Ben70}{article}{
   author={Bennett, Bruce Michael},
   title={On the characteristic functions of a local ring},
   journal={Ann. of Math. (2)},
   volume={91},
   date={1970},
   pages={25--87},
}

\bib{Che00}{article}{
   author={Cheltsov, Ivan},
   title={On a smooth four-dimensional quintic},
   language={Russian, with Russian summary},
   journal={Mat. Sb.},
   volume={191},
   date={2000},
   number={9},
   pages={139--160},
   issn={0368-8666},
   translation={
      journal={Sb. Math.},
      volume={191},
      date={2000},
      number={9-10},
      pages={1399--1419},
   },
}

\bib{Che07}{article}{
   author={Cheltsov, Ivan},
   title={On nodal sextic fivefold},
   journal={Math. Nachr.},
   volume={280},
   date={2007},
   number={12},
   pages={1344--1353},
}

\bib{Cor95}{article}{
   author={Corti, Alessio},
   title={Factoring birational maps of threefolds after Sarkisov},
   journal={J. Algebraic Geom.},
   volume={4},
   date={1995},
   number={2},
   pages={223--254},
}

\bib{CM04}{article}{
   author={Corti, Alessio},
   author={Mella, Massimiliano},
   title={Birational geometry of terminal quartic 3-folds. I},
   journal={Amer. J. Math.},
   volume={126},
   date={2004},
   number={4},
   pages={739--761},
}

\bib{dF}{article}{
   author={de Fernex, Tommaso},
   title={Birationally rigid hypersurfaces},
   journal={Invent. Math.},
   volume={192},
   date={2013},
   number={3},
   pages={533--566},
}

\bib{dF14}{article}{
   author={de Fernex, Tommaso},
   title={Fano hypersurfaces and their birational geometry},
   conference={
      title={Groups of Automorphisms in Birational and Affine Geometry, Levico Terme (Trento), 2012},
   },
   book={
      publisher={Springer Proceedings in Mathematics \& Statistics},
   volume={79},
   },
   date={2014},   
}

\bib{dFD}{article}{
   author={de Fernex, Tommaso},
   author={Docampo, Roi},
   title={Jacobian discrepancies and rational singularities},
   journal={J. Eur. Math. Soc.},
   year={2012},
   volume={16},
   date={2014},
   pages={165--199},
}

\bib{dFEM03}{article}{
   author={de Fernex, Tommaso},
   author={Ein, Lawrence},
   author={Musta{\c{t}}{\u{a}}, Mircea},
   title={Bounds for log canonical thresholds with applications to
   birational rigidity},
   journal={Math. Res. Lett.},
   volume={10},
   date={2003},
   number={2-3},
   pages={219--236},
}

\bib{dFEM04}{article}{
   author={de Fernex, Tommaso},
   author={Ein, Lawrence},
   author={Musta{\c{t}}{\u{a}}, Mircea},
   title={Multiplicities and log canonical threshold},
   journal={J. Algebraic Geom.},
   volume={13},
   date={2004},
   number={3},
   pages={603--615},
}

\bib{dFM}{article}{
   author={de Fernex, Tommaso},
   author={Musta{\c{t}}{\u{a}}, Mircea},
   title={The volume of a set of arcs on a variety},
   journal={Rev. Roumaine Math. Pures Appl.},
   volume={60},
   date={2015},
   pages={375--401},
   note={Special issue in honor of Lucian Badescu's 70th birthday},
}

\bib{EM04}{article}{
   author={Ein, Lawrence},
   author={Musta{\c{t}}{\u{a}}, Mircea},
   title={Inversion of adjunction for local complete intersection varieties},
   journal={Amer. J. Math.},
   volume={126},
   date={2004},
   number={6},
   pages={1355--1365},
}

\bib{Ful98}{book}{
   author={Fulton, William},
   title={Intersection theory},
   series={Ergebnisse der Mathematik und ihrer Grenzgebiete. 3. Folge. A
   Series of Modern Surveys in Mathematics [Results in Mathematics and
   Related Areas. 3rd Series. A Series of Modern Surveys in Mathematics]},
   volume={2},
   edition={2},
   publisher={Springer-Verlag},
   place={Berlin},
   date={1998},
   pages={xiv+470},
}

\bib{IM71}{article}{
   author={Iskovskih, V. A.},
   author={Manin, Ju. I.},
   title={Three-dimensional quartics and counterexamples to the L\"uroth
   problem},
   language={Russian},
   journal={Mat. Sb. (N.S.)},
   volume={86(128)},
   date={1971},
   pages={140--166},
}

\bib{KM98}{book}{
   author={Koll{\'a}r, J{\'a}nos},
   author={Mori, Shigefumi},
   title={Birational geometry of algebraic varieties},
   series={Cambridge Tracts in Mathematics},
   volume={134},
   note={With the collaboration of C. H. Clemens and A. Corti;
   Translated from the 1998 Japanese original},
   publisher={Cambridge University Press},
   place={Cambridge},
   date={1998},
   pages={viii+254},
}

\bib{Laz04}{book}{
   author={Lazarsfeld, Robert},
   title={Positivity in algebraic geometry. II},
   series={Ergebnisse der Mathematik und ihrer Grenzgebiete. 3. Folge. A
   Series of Modern Surveys in Mathematics [Results in Mathematics and
   Related Areas. 3rd Series. A Series of Modern Surveys in Mathematics]},
   volume={49},
   note={Positivity for vector bundles, and multiplier ideals},
   publisher={Springer-Verlag},
   place={Berlin},
   date={2004},
   pages={xviii+385},
}

\bib{MY82}{article}{
   author={Mather, John N.},
   author={Yau, Stephen S. T.},
   title={Classification of isolated hypersurface singularities by their
   moduli algebras},
   journal={Invent. Math.},
   volume={69},
   date={1982},
   number={2},
   pages={243--251},
}

\bib{Mel04}{article}{
   author={Mella, Massimiliano},
   title={Birational geometry of quartic 3-folds. II. The importance of
   being $\mathbb Q$-factorial},
   journal={Math. Ann.},
   volume={330},
   date={2004},
   number={1},
   pages={107--126},
}

\bib{Puk87}{article}{
   author={Pukhlikov, A. V.},
   title={Birational isomorphisms of four-dimensional quintics},
   journal={Invent. Math.},
   volume={87},
   date={1987},
   number={2},
   pages={303--329},
}

\bib{Puk88}{article}{
   author={Pukhlikov, A. V.},
   title={Birational automorphisms of a three-dimensional quartic with a
   simple singularity},
   language={Russian},
   journal={Mat. Sb. (N.S.)},
   volume={135(177)},
   date={1988},
   number={4},
   pages={472--496, 559},
   translation={
      journal={Math. USSR-Sb.},
      volume={63},
      date={1989},
      number={2},
      pages={457--482},
   },
}

\bib{Puk98}{article}{
   author={Pukhlikov, A. V.},
   title={Birational automorphisms of Fano hypersurfaces},
   journal={Invent. Math.},
   volume={134},
   date={1998},
   number={2},
   pages={401--426},
}

\bib{Puk02a}{article}{
   author={Pukhlikov, A. V.},
   title={Birationally rigid Fano hypersurfaces},
   language={Russian, with Russian summary},
   journal={Izv. Ross. Akad. Nauk Ser. Mat.},
   volume={66},
   date={2002},
   number={6},
   pages={159--186},
   translation={
      journal={Izv. Math.},
      volume={66},
      date={2002},
      number={6},
      pages={1243--1269},
   },
}

\bib{Puk02b}{article}{
   author={Pukhlikov, A. V.},
   title={Birationally rigid Fano hypersurfaces with isolated singularities},
   language={Russian, with Russian summary},
   journal={Mat. Sb.},
   volume={193},
   date={2002},
   number={3},
   pages={135--160},
   translation={
      journal={Sb. Math.},
      volume={193},
      date={2002},
      number={3-4},
      pages={445--471},
   },
}

\bib{Puk03}{article}{
   author={Pukhlikov, A. V.},
   title={Birationally rigid singular Fano hypersurfaces},
   note={Algebraic geometry, 12},
   journal={J. Math. Sci. (N. Y.)},
   volume={115},
   date={2003},
   number={3},
   pages={2428--2436},
}

\bib{RS06}{article}{
   author={Ravindra, G. V.},
   author={Srinivas, V.},
   title={The Grothendieck-Lefschetz theorem for normal projective
   varieties},
   journal={J. Algebraic Geom.},
   volume={15},
   date={2006},
   number={3},
   pages={563--590},
}

\bib{Tei73}{article}{
   author={Teissier, Bernard},
   title={Cycles \'evanescents, sections planes et conditions de Whitney},
   language={French},
   conference={
      title={Singularit\'es \`a Carg\`ese},
      address={Rencontre Singularit\'es G\'eom. Anal., Inst. \'Etudes Sci.,
      Carg\`ese},
      date={1972},
   },
   book={
      publisher={Soc. Math. France, Paris},
   },
   date={1973},
   pages={285--362. Ast\'erisque, Nos. 7 et 8},
}

\bib{Tei77}{article}{
   author={Teissier, Bernard},
   title={The hunting of invariants in the geometry of discriminants},
   conference={
      title={Real and complex singularities (Proc. Ninth Nordic Summer
      School/NAVF Sympos. Math., Oslo, 1976)},
   },
   book={
      publisher={Sijthoff and Noordhoff, Alphen aan den Rijn},
   },
   date={1977},
   pages={565--678},
}

\bib{Tei80}{article}{
   author={Teissier, Bernard},
   title={R\'esolution simultan\'ee; II: R\'esolution simultan\'ee 
   et cylces \'evanescents},
   book={  
   title={S\'eminaire sur les Singularit\'es des Surfaces},
   subtitle={Held at the Centre de Math\'ematiques de l'\'Ecole Polytechnique
   Palaiseau, 1976--1977},
   series={Lecture Notes in Mathematics},
   volume={777},
   publisher={Springer},
   place={Berlin},
   },
   date={1980},
}

\end{biblist}
\end{bibdiv}

\end{document}